\def\tD{{\tilde D}}
\def\inv{^{-1}}
\DeclareMathOperator{\del}{\partial}
\def\refp #1.{(\ref{#1})}
\def\sbr #1.{^{[#1]}}
\def\sfl #1.{^{\lfloor #1\rfloor}}
\def\what{\widehat}
\def\inv{^{-1}}
\def\?{{\bf{??}}}
\def\dgla{differential graded Lie algebra\ }
\def\HH{\mathbb H}
\def\C{\mathbb C}
\def\P{\mathbb P}
\def\sym{\text{\rm Sym} }
\def\Spec{\text{\rm Spec} }
\def\pf{\mathrm {Pf}}
\def\O{\mathcal O}
\def\g{\mathfrak g}
\def\m{\mathfrak m}
\def\1/2{\frac{1}{2}}
\def\I{\mathcal{ I}}
\def\d{\mathrm{d}}
\def\2{{[2]}}
\def\nl{\newline}
\def\<{\langle}
\def\>{\rangle}
\def\2{{[2]}}
\def\Def{\mathrm{Def}}
\def\scl #1.{^{\lceil#1\rceil}}
\def\spr #1.{^{(#1)}}
\def\sbc #1.{^{\{#1\}}}
\def\subpr#1.{_{(#1)}}
\def\beq{\begin{equation*}}
\def\eeq{\end{equation*}}
\newcommand{\sing}{{\mathrm{sing}}}
\newcommand{\llog}[1]{\langle\log {#1} \rangle}
\def\g3{{\Gamma\spr 3.}}
\def\Def{\mathrm {Def}}
\newcommand{\eqspl}[2]{
\begin{equation}\label{#1}
\begin{split}
#2\end{split}\end{equation}}
\newcommand{\eqsp}[1]{\begin{equation*}
\begin{split}#1\end{split}\end{equation*}}
\newcommand{\beginalphaenum}{
\begin{enumerate}\renewcommand{\labelenumi}{ }
\item \begin{enumerate}
}
\def\eex{\end{rm}\end{example}}
\def\g{\mathfrak g}
\newtheorem{thm}{Theorem} 
\newtheorem*{thm*}{Theorem}
\newtheorem*{prop*}{Proposition}
\newtheorem{cor}[thm]{Corollary}
\newtheorem*{cor*}{Corollary}
\newtheorem{lem}[thm]{Lemma}
\newtheorem{lem*}{Lemma}
\newtheorem*{claim*}{Claim}
\newtheorem{prop}[thm]{Proposition}
\theoremstyle{remark}
\newtheorem{rem}[thm]{Remark}
\newtheorem{crit-rem}[thm]{Critical remark}
\newtheorem{remarks}[thm]{Remarks}
\newtheorem{example}[thm]{Example}
\newtheorem*{example*}{Example}
\newtheorem*{defn*}{Definition}
\begin{document} 
\title{Deformations of log-Lagrangian submanifolds of Poisson manifolds}
\author 
{Ziv Ran}



\thanks{arxiv.org/1311.2656}
\date {\today}


\address {\nl UC Math Dept. \nl
Big Springs Road Surge Facility
\nl
Riverside CA 92521 US\nl 
ziv.ran @  ucr.edu\nl
\url{http://math.ucr.edu/~ziv/}
}

 \subjclass[2010]{14J40, 32G07, 53D12, 53D17}
\keywords{Poisson structure, 
symplectic structure, complex structure, 
Lagrangian submanifold, deformation theory, 
differential graded Lie algebra, Schouten algebra, log complex, mixed Hodge theory}

\begin{abstract}
We consider Lagrangian-like submanifolds in certain even-dimensional
'symplectic-like' Poisson manifolds. We show, under a suitable
transversality hypothesis,  that the pair consisting of the ambient Poisson manifold
and the submanifold has unobstructed deformations,
and that the deformations automatically preserve the Lagrangian-like property.

\end{abstract}
\maketitle
The study of holomorphic    
Lagrangian submanifolds of compact holomorphic symplectic manifolds
and their deformation theory is well
established (see e.g. \cite{gross-cy-book} and references therein). 
Voisin \cite{voisin-lagrangian} proved 
that pairs $(X, Y)$ consisting of a compact K\"ahlerian symplectic
manifold $X$ and a Lagrangian submanifold $Y$ have
unobstructed deformations, i.e. an appropriate
Kuranishi space is polydisc; and under these deformations $Y$ stays
Lagrangian. See also \cite{hitchin-lagrangian}. \par Recently, refining some results
of Goto \cite{goto} and Hitchin \cite{hitchin-poisson},
we studied in  \cite{qsymplectic} certain even-dimensional  compact K\"ahlerian
Poisson
manifolds called {pseudo-symplectic}, from the point of view that they are analogous to
symplectic manifolds. A Poisson structure $\Pi\in\mathrm H^0(X, \bigwedge^2 T_X)$ 
on a complex manifold $X$ of dimension
$2n$ is said to be
 \emph{pseudo-symplectic}\footnote{As a referee suggests, 'log-symplectic' might be a better term;
 we also thank this referee for suggesting the term 'log-Lagrangian' used below.}
  if it is almost everywhere nondegenerate. When that is the case, $\Pi$ will
 degenerate along an anticanonical divisor $D=[\Pi^n]$ called the \emph{Pfaffian}
 of $\Pi$. We introduced a condition on $\Pi$ called \emph{P-normality}, which says
 that $D$ has normal crossings and is smooth wherever $\Pi$ has corank exactly 2.
Roughly speaking, the P-normality condition means that $(X,\Pi)$ is locally
 a product of Poisson manifolds of the form
 \emph{ (smooth surface, smooth anticanonical divisor).}
 We showed under this condition that $(X,\Pi)$ has, in a strong sense, unobstructed deformations.\par
 Here we consider an analogue of Lagrangian submanifolds in the Poisson
 setting. An $n$-dimensional closed
 submanifold $Y$ in a $2n$-dimensional
pseudo-symplectic manifold $(X, \Pi)$ is said 
to be \emph{log-Lagrangian} if 
 for all $y\in Y$, the conormal space $\check{N}_{Y,y}$
is an isotroptic subspace for $\Pi$.
Note that the log-Lagrangian condition implies that for all $y\in Y\setminus D$,
i.e. all points $y\in Y$- if any- where $\Pi$ is nondegenerate, $T_{Y, y}$ is a maximal
isotropic subspace for the symplectic form $\Phi_y=\Pi\inv_y$.\par
As an immediate example (see also Example \ref{hilbert-example} below), if $S$
is a smooth surface endowed with an effective anticanonical divisor $C=[\Pi], \Pi\in H^0(\O_S(-K))$,
there is a natural Poisson structure $\Pi\sbr r.$ on the Hilbert scheme $S\sbr r.$ (see \cite{qsymplectic}).
If  $B$ is any smooth curve on $S$, then $B\spr r.\subset S\sbr r .$ is a log-Lagrangian
submanifold.\par
 In this paper we will
restrict our attention to log-Lagrangians satisfying a transversality condition.
Specifically, if $(X\ \Pi)$ is a pseudo-symplectic Poisson manifold
with Pfaffian $D$,
a log-Lagrangian  $Y$  is said to be 
\emph{transverse}  (to $D$ or to $\Pi$ if specification be needed) if 
$X$ is P-normal in a neighborhood of $Y$, and
locally at every point $y\in Y\cap D$, the intersection $\bar D=D\cap Y$ is 
a normal-crossing divisor on $Y$ (this transversality condition 
holds vacuously if $D\cap Y$ is empty).
When $Y$ is transverse, we have that
$\check{N}_{Y,y}$ is isotropic for all $y\in Y$.
In the Hilbert scheme example above, $B\spr r.$ is transverse if $B$ is transverse to $C$;
in fact, $D$ is smooth in a neighborhood of $D\cap B\spr r.$.\par
Given $(X,\Pi)$ pseudo-symplectic Poisson and $Y\subset X$ log-Lagrangian,
one may consider deformations of the triple $(X, \Pi, Y)$ with or without the condition
that $Y$  stay log-Lagrangian. 
Let us call such deformations \emph{Poisson-Lagrange} or \emph{Poisson} respectively.
One may also consider deformations of $Y$ in $X$, with $(X, \Pi)$ fixed, again with or
without the Lagrangian condition; these will be named \emph{Lagrange} or \emph{Hilbert}
respectively. 
In fact, we will prove that the deformation
spaces with or without the Lagrangian condition are identical and smooth; in particular,
the log-Lagrangian property is 'sticky', impossible to move away from. 
Thus, our  main purpose is to prove
\begin{thm*}
Let $Y$ be a compact transverse log-Lagrangian submanifold of a
K\"ahlerian Poisson
manifold $(X, \Pi)$ that is P-normal along $Y$. Then\par
(i) $Y$ has unobstructed Hilbert and Lagrange deformations in $X$, and these deformation
spaces are identical;
\par
(ii) if moreover  $\Pi$ is P-normal  everywhere and 
$X$ is compact,  the triple $(X,\Pi,Y)$ has unobstructed Poisson-Lagrange and Poisson
deformations,
and these deformation spaces are identical.
\end{thm*}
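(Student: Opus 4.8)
The plan is to attach to each of the four functors a differential graded Lie algebra governing it, to trivialize these DGLAs by contraction with the (log-)symplectic tensor $\Pi$ so that they become logarithmic de~Rham type complexes on the normal-crossing pair $(Y,\bar D)$ (resp.\ $(X,D)$), and then to read off unobstructedness, together with the asserted coincidences of deformation spaces, from the $E_1$-degeneration of the associated logarithmic Hodge--de~Rham spectral sequence---that is, from Deligne's mixed Hodge theory applied to the open complement. This is the same mechanism by which \cite{qsymplectic} handles the ambient factor alone, transported to the relative setting of the pair.

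I would treat part (i) first. A local computation at a point $y\in Y$, using the log-Lagrangian and transversality hypotheses, shows that contraction with $\Pi$ produces a canonical isomorphism $N_{Y/X}\simeq\Omega^1_Y(\log\bar D)$ identifying the normal bundle with the logarithmic cotangent bundle of $(Y,\bar D)$; this is the logarithmic incarnation of the classical fact that the normal bundle of a Lagrangian is its cotangent bundle, the isotropy of $\check N_{Y,y}$ being exactly what makes the contraction land in the log-cotangent sheaf. Under this isomorphism the tangent space to Hilbert deformations is $H^0(\Omega^1_Y(\log\bar D))$ and the obstructions lie in $H^1(\Omega^1_Y(\log\bar D))$. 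The coincidence of tangent spaces for the Hilbert and Lagrange functors is then immediate: a global holomorphic logarithmic $1$-form on a compact K\"ahler $Y$ is automatically $d$-closed, by strictness of the Hodge filtration on $H^1$ of $Y\setminus\bar D$, so every first-order Hilbert deformation already preserves isotropy. This closedness is the infinitesimal form of the ``stickiness'' of the log-Lagrangian condition.

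For unobstructedness I would invoke the $T^1$-lifting criterion: it is enough to verify, for each small extension $A'\to A$ of Artinian bases, that the induced map on relative first-order deformations is surjective. Passing through the isomorphism with the log complex, this surjectivity is precisely the $E_1$-degeneration of the logarithmic Hodge--de~Rham spectral sequence of $(Y,\bar D)$, which holds because transversality guarantees that $\bar D=D\cap Y$ is a genuine normal-crossing divisor on $Y$, so that Deligne's degeneration theorem applies. Once both functors are smooth and carry the same tangent space, the tautological inclusion of the Lagrange functor into the Hilbert functor is a closed immersion of smooth germs of equal dimension, hence an isomorphism---giving the identity of the two deformation spaces. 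I expect the main technical obstacle to be checking that the contraction isomorphism is compatible with \emph{both} logarithmic structures at once (along $D$ in $X$ and along $\bar D$ in $Y$), so that the complex one degenerates is honestly the log complex of a normal-crossing pair and not merely an abstract filtered complex; transversality is exactly the hypothesis that secures this compatibility.

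Part (ii) proceeds by enlarging the governing DGLA to let $(X,\Pi)$ vary, the relevant object being the logarithmic Schouten complex $(\bigwedge^{\bullet}T_X(-\log D),[\Pi,-])$ of the ambient manifold coupled to the submanifold data through a mapping cone. The ambient factor already has unobstructed Poisson deformations by \cite{qsymplectic}, and contraction with $\Pi$ once more converts the total complex into a logarithmic de~Rham complex of $(X,D)$ relative to $(Y,\bar D)$; compactness of $X$ together with global P-normality then permits exactly the same $T^1$-lifting argument, the degeneration now being that of the log complex of the ambient normal-crossing pair. Unobstructedness of the triple and the coincidence Poisson~$=$~Poisson-Lagrange follow as in part (i), the closedness of global logarithmic forms again forcing every Poisson first-order deformation to respect the log-Lagrangian condition, and the equality of smooth germs of equal dimension upgrading this to an isomorphism of deformation spaces.
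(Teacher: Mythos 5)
Your part (i) follows the paper's own route: the contraction isomorphism $N\simeq\Omega^1_Y\llog{\bar D}$ (and, at the level of the full dg Lie atom, $N^\bullet\simeq\Omega^\bullet_Y\llog{\bar D}$, which is the right-hand vertical arrow of the paper's diagram \eqref{duality} induced by $\bigwedge^\bullet\Pi^\#$), followed by Deligne's $E_1$-degeneration for the normal-crossing pair $(Y,\bar D)$, is exactly how the paper obtains smoothness and the equality of the Hilbert and Lagrange spaces. The compatibility issue you flag is the one the paper resolves by the transversality computation showing that the cokernel of $T^\bullet_X\{Y\}\llog{D}\to T^\bullet_X\llog{D}$ is plain $N^\bullet$ rather than some log-modified normal complex. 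One caveat: $d$-closedness of global log $1$-forms kills only the first obstruction to lifting a Hilbert deformation to a Lagrange one; what is actually needed is surjectivity of the full edge map $\HH^0(N^\bullet)\to H^0(N)$, which again follows from degeneration, and then your smooth-germ comparison closes the argument. So (i) is essentially correct and coincides with the paper.

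Part (ii) has a genuine gap. The complex that $E_1$-degeneration trivializes is the \emph{logarithmic} one, $T^\bullet_X\{Y\}\llog{D}\simeq\Omega^\bullet_X\{Y\}\llog{D}$, and the deformation problem that dgla governs is that of the quadruple $(X,\Pi,D,Y)$ with $D$ deforming locally trivially. Poisson and Poisson--Lagrange deformations of the triple $(X,\Pi,Y)$ impose no a priori condition on the Pfaffian divisor, and are governed by the non-logarithmic object $T^\bullet_X\{Y\}$ (Theorem \ref{triple-thm}); your proposal passes from one to the other without justification. The paper bridges them with a separate construction: an auxiliary complex $K^\bullet\{Y\}$ with $K^i\{Y\}=T^{i+1}_X\{Y\}\oplus T^{i-1}_X\otimes N^0_D$, a quasi-isomorphism $T^\bullet_X\{Y\}\llog{D}\to K^\bullet\{Y\}$, and a map $\phi:T^\bullet_X\{Y\}\to K^\bullet\{Y\}$ whose degree-two component is $u\mapsto nu\wedge\Pi^{n-1}$, exhibiting $T^\bullet_X\{Y\}\llog{D}$ as a derived direct summand of $T^\bullet_X\{Y\}$. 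Global P-normality enters precisely here, to guarantee that $u\wedge\Pi^{n-1}$ lands in the locally trivial subsheaf $N^0_D$ and that $N^0_D$ computed from $T_X\{Y\}$ agrees, by transversality, with that computed from $T_X$; it is also this splitting that shows Poisson deformations of the triple automatically move $D$ locally trivially, which is what makes Poisson and Poisson--Lagrange deformations coincide. Citing \cite{qsymplectic} for the ambient factor does not supply this step, since the splitting must be redone for the exterior ideal $T^\bullet_X\{Y\}$ rather than for $T^\bullet_X$ alone. As written, your argument establishes assertions (ii)--(iii) of Theorem \ref{XYD-thm} (the quadruple with locally trivial $D$) but not assertion (i) for the triple.
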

This result 
includes the special case  where $Y$ is empty, which is the main result of \cite{qsymplectic},
as well as the special case where $\Pi$ is symplectic (so $D$ is empty: Voisin's theorem
\cite {voisin-lagrangian}). In fact, we will prove a more precise result (see Theorem
\ref{XYD-thm} below).\par
From a different viewpoint, some results on deformations of submanifolds of Poisson manifolds
were obtained by Baranovsky, Ginzburg et al. \cite{baranovsky}.
\vskip 1in
\section{Pr\'ecis of deformation theory}
\subsection{Lie atom, SELA} A (dg) \emph{Lie atom} \cite{atom} is the mapping cone associated to a Lie module
homomorphism
\[i:\g\to\mathfrak h\]
from a (differential graded) Lie algebra $\g$, endowed with the left regular representation,
to a $\g$-module. 
A Lie atom  is the special case corresponding to a 1-simplex 
of  a \emph{semi-cosimplcial object}
in the category of (differential graded) Lie algebras (called
in \cite{sela} semi-simplicial Lie algebras or selas for short).
In \cite{atom} we developed the deformation theory of Lie atoms, and
this was extended in \cite{sela} to the case of a sela.
In any case, the deformation theory connection is based on interpreting bracket-induced maps
on cohomology as obstructions.
For this paper, an important example of a Lie atom is the shift $N[-1]$
where $N$ is the normal bundle of a submanifold $Y\subset X$ of a compact complex manifold,
realized as the mapping cone of the inclusion
\[i:T_{X/Y}\to T_X\]
where $T_{X/Y}$ denotes the sheaf of holomorphic vector fields tangent to $Y$ along $Y$.
Then the deformation theory of $N[-1]$ is the deformation theory of $Y$ as submanifold of $X$
(i.e. the local structure of 
the Hilbert scheme or Douady space), which goes back to Kodaira-Spencer and Grothendieck
(see e.g. \cite{sernesi}, \S 3.4.4). Its starting point is the classical obstruction map
$\sym^2(H^0(N))\to H^1(N)$.
An example of a sela is the sela $\mathcal T^\bullet_X$ associated to an algebraic scheme $X/\C$, 
whose deformation theory is that of $X$ as $\C$-scheme.\par
\subsection{Jacobi-Bernoulli complex}
Briefly put, associated to a sela $\g$ satisfying suitable hypotheses (e.g. finiteness, vanishing 
of $\HH^0$, where
the latter
corresponds to having  trivial automorphisms),
 there is a comultiplicative 
complex $J^\bullet(\g)$ in \emph{strictly negative} degrees, 
called a Jacobi-Bernoulli complex, initially defined for Lie atoms  in \cite{atom}, \S 1.2, 
such that for the $(-n)$-th truncation $J_n(\g)$, we have that
\[R_n(\g):=\C\oplus \HH^0(J_n(\g))^*\] 
is a (commutative associative) Artinian local
ring 
classifying $n$-th order deformations
of $\g$. 
Essentially, the $(-i)$-th term of $J$ is $\bigwedge\limits^i(\g)$; however when $\g$ is not a dgla
(i.e. when the simplicial complex in question is not reduced to a point), some of the differentials in the complex
must be twisted by Bernoulli numbers (hence the name).
See e.g.\cite{ atom}, Thm. 3.3 (which is sufficient for this paper) and \cite{sela}, Thm 1.3 and Thm. 6.1.
\par
\subsection{$T^1$- lifting} Many results on unobstructed deformations, including those in this paper,
rely on a connection with Hodge theory
which has appeared before in a number of guises. Perhaps the most transparent one is
the '$T^1$-lifting criterion' of \cite{ran-torsion-or-negative}. In a nutshell,
this runs as follows. Suppose we have 'suitable' isomorphisms
\[\phi_i:T^i\to H^i, i=1,2\]
from a deformation group (resp. obstruction group) to a Hodge cohomology group.
Let $v\in T^1$. Then the obstruction $o(v)\in T^2$ to lifting $v$ to higher order
coincides via $\phi$ with the obstruction $\langle v, \phi_1(v)\rangle$ to deforming the cohomology
class $\phi(v)$  in the direction $v$, i.e.
\[\phi_2(o(v))=\langle v, \phi_1(v)\rangle\]
(in the cases where this is used, both sides are computed in  terms of Lie brackets, which ultimately 
turn out to be the same bracket).
However, the RHS above vanishes as $\phi(v)$ is
essentially topological in character (this requires the deformation corresponding to $v$ to be locally
trivial, which holds in our case). Therefore $o(v)=0$. Thus, the deformation space corresponding to $T^1$ is unobstructed.\par
This heuristic reasoning is made precise in various references, including
\cite{ran-torsion-or-negative}, \cite{qsymplectic}, \cite{voisin-lagrangian}.
\vskip 1in
\section{The normal dg atom}
See \cite{ciccoli} or \cite{dufour-zung} for Poisson basics.\par
\subsection{Normal DG atom and Lagrangian deformations}
A \emph{PL triple} $(X, \Pi, Y)$ by definition consists of
a a log-Lagrangian submanifold $Y$ of a pseudo-symplectic manifold
$(X, \Pi)$. 
Fixing a PL triple $(X, \Pi, Y)$, 
our purpose in this section is to describe a Lie-theoretic object (a dg Lie atom)
which controls log-Lagrange deformations of $Y$, i.e. 
deformations of $Y$ in $X$, fixing $X$, preserving the log-Lagrangian property.
In the next subsection, this will be extended to deformations of the entire PL triple.\par
Let $N$ denote the normal bundle of $Y$ and let
\[N^\bullet=\bigoplus\limits_{i=1}^n\bigwedge^iN\] be the exterior algebra on $N$.
 As discussed above, it is classical (and independent of any Poisson or Lagrangian properties)
that $N$ has a bracket structure that controls Hilbert deformations of $Y$ in $X$.
From the Lie-atomic viewpoint, this is developed in detail in \cite{atom} (see especially \S 3.2, Example 1.1.4.D).
In this section we will develop the log-Lagrangian analogue, where the main new feature
is the differential on $N^\bullet$. Thus we will
prove (compare \cite{baranovsky}):
\begin{thm}
Notations as above,\par (i) $N^\bullet$
admits the structure of differential graded Lie atom;\par
(ii) if $Y$ is compact, $N^\bullet$-deformations coincide with Hilbert-Lagrange deformations of $Y$ in $X$,
and the projection $N^\bullet\to N$ corresponds to the forgetful map from Hilbert-Lagrange to Hilbert
deformations.
\end{thm}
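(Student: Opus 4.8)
The plan is to base everything on one structural identification, namely that the Poisson anchor realizes the normal bundle as a sheaf of logarithmic one-forms on $Y$. Write $\Pi^\sharp:\Omega^1_X\to T_X$ for the contraction with the bivector. The (transverse) log-Lagrangian hypothesis says precisely that $\Pi^\sharp$ carries the conormal sheaf $\check N=N^*$ into $T_Y$: indeed, for $\alpha,\beta$ conormal one has $\langle\Pi^\sharp\alpha,\beta\rangle=\Pi(\alpha,\beta)=0$, so $\Pi^\sharp\alpha$ annihilates $\check N$ and is therefore tangent to $Y$. Away from $D$ this map $\check N\to T_Y$ is an isomorphism (the classical fact that a symplectic form identifies the conormal of a Lagrangian with its tangent bundle). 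Along $\bar D=D\cap Y$ the anchor drops rank, and here the transversality and P-normality hypotheses enter: in the local product model guaranteed by P-normality---whose factors are pairs (smooth surface, smooth anticanonical divisor)---one computes directly that the image is exactly the logarithmic tangent sheaf $T_Y(-\log\bar D)$ and that $\Pi^\sharp$ is an isomorphism onto it. Dualizing and taking exterior powers yields
\[ N\cong\Omega^1_Y(\log\bar D),\qquad N^\bullet=\bigoplus_{i=1}^n\bigwedge^i N\cong\bigoplus_{i=1}^n\Omega^i_Y(\log\bar D). \]

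This identifies $N^\bullet$ with the logarithmic de Rham complex, and for (i) it remains to produce the differential and the bracket. The differential will be the exterior derivative $d$ on $\Omega^\bullet_Y(\log\bar D)$; I would obtain it intrinsically by transporting the Poisson (Lichnerowicz) differential $[\Pi,-]$ on polyvector fields through $\Pi^\sharp$, using the standard fact that the anchor intertwines $[\Pi,-]$ with $d$ wherever $\Pi$ is nondegenerate, and checking in the product model that no poles worse than logarithmic are created along $\bar D$. The Lie-atom structure is then the natural refinement of the classical Hilbert atom $\cone(T_{X/Y}\to T_X)$, whose degree-one cohomology is $N$: one enhances it to a mapping cone of polyvector fields $\bigwedge^\bullet T_X$ with the Schouten--Nijenhuis bracket and the differential $[\Pi,-]$, taking the sub-object of polyvectors tangent to $Y$, so that the cone has cohomology sheaves $\bigwedge^\bullet N$ carrying exactly the de Rham differential just described. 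Here $d^2=0$ is automatic, and the only real point is to verify that $d$ is a derivation of the graded bracket, so that the resulting object is a genuine dg Lie atom in the sense of \cite{atom}.

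For (ii) I would compare the abstract $N^\bullet$-deformation functor with the geometric one by means of the projection $N^\bullet\to N$ onto the degree-one summand, which by construction induces the forgetful map from log-Lagrange to Hilbert deformations and which on tangent spaces is the classical Kodaira--Spencer identification. A log-Lagrangian normal form (the pseudo-symplectic analogue of Weinstein's theorem, realizing a neighborhood of $Y$ as a neighborhood of the zero section in the total space of $\Omega^1_Y(\log\bar D)$) shows that a nearby section stays log-Lagrangian if and only if the corresponding logarithmic one-form is closed, and this criterion persists order by order. Matching this closedness condition against the Maurer--Cartan equation of the dg Lie atom---whose higher terms are governed precisely by $d$ and the bracket---identifies the two deformation functors and, via the projection, the forgetful map, as asserted.

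The main obstacle is the behavior along $\bar D$. Everything above is transparent on $Y\setminus D$, where the classical symplectic and Weinstein pictures apply verbatim; the real work is to show that the sharp map is an isomorphism onto $T_Y(-\log\bar D)$, that the transported Poisson differential acquires at worst logarithmic poles and coincides with the de Rham differential of $\Omega^\bullet_Y(\log\bar D)$, and that it remains a derivation of the bracket there. This is exactly where the transversality of $\bar D$ and the P-normality of $\Pi$ along $Y$ are indispensable, since they supply the local product decomposition into factors (smooth surface, smooth anticanonical curve) in which all of these statements reduce to a one-variable logarithmic computation.
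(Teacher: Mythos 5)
Your proposal proves a different (weaker) statement than the one at hand, and its two load-bearing steps are not available. First, the theorem is stated for an arbitrary PL triple $(X,\Pi,Y)$: at this point in the paper neither transversality of $Y$ to $D$ nor P-normality of $\Pi$ along $Y$ is assumed, so you cannot invoke the local product model or the identification $N\cong\Omega^1_Y\langle\log \bar D\rangle$ (that identification is established and used only later, in the proof of the unobstructedness theorem, precisely under those extra hypotheses). The paper's construction of the dg Lie atom is purely algebraic and hypothesis-free beyond the log-Lagrangian condition: one takes the mapping cone of $T_{X/Y}T^\bullet_X\to T^\bullet_X$ (the exterior ideal generated by $T_{X/Y}$ inside the Schouten algebra), and the whole content of part (i) is the verification that the Lichnerowicz differential $[\cdot,\Pi]$ preserves this subobject, i.e. $[T_{X/Y},\Pi]\subset T_{X/Y}T_X$. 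This is checked directly by pairing $[v,\Pi]$ against $df_1\wedge df_2$ for $f_1,f_2$ in the ideal of $Y$ and applying the Lichnerowicz formula, where each term vanishes along $Y$ by the isotropy of the conormal space. Your write-up names this descent as the thing to be done ("taking the sub-object of polyvectors tangent to $Y$") but replaces the actual verification by an appeal to a logarithmic pole computation in a product model that you are not entitled to assume and do not carry out; this is the one place the log-Lagrangian hypothesis enters, so it cannot be elided.

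Second, for part (ii) you rely on a holomorphic Weinstein-type normal form realizing a neighborhood of $Y$ in $X$ as a neighborhood of the zero section of $\Omega^1_Y\langle\log\bar D\rangle$, with log-Lagrangian deformations corresponding to closed logarithmic one-forms. No such global holomorphic tubular neighborhood theorem exists (it already fails for Lagrangians in holomorphic symplectic manifolds, which is why Voisin's argument does not use it), and nothing of the sort is established in the paper. The paper instead proves the equivalence of $N^\bullet$-deformations with Hilbert-Lagrange deformations as a corollary of a statement about the full triple: over an Artinian base $R$ with small extension $R\to R_1=R/(\eta)$, a flat deformation $\tilde Y\subset\tilde X$ that is Lagrangian over $R_1$ has its failure to be Lagrangian over $R$ measured by the pairing $(f_1,f_2)\mapsto\{v(f_1),f_2\}-\{v(f_2),f_1\}-v(\{f_1,f_2\})$ on the ideal sheaf, which is identified with $\langle[v,\Pi],df_1\wedge df_2\rangle$, i.e. with the image of the Kodaira--Spencer class under the differential of the atom. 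That order-by-order ideal-theoretic comparison is what actually matches the geometric functor with the Jacobi--Bernoulli/Maurer--Cartan data; your closed-form criterion is at best a heuristic for it on $Y\setminus D$ and does not constitute a proof.
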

This result is not new. The existence of the differential on $N^\bullet$ was certainly known
to Baranovsky et al. \cite{baranovsky}, as was, in some form, the relationship of $N^\bullet$
to log-Lagrangian deformations.
\begin{proof}[Proof of Theorem]
(i) As mentioned above, the Lie atom structure of $N$ holds generally without any Poisson or Lagrange
conditions, and
was discussed in \cite{atom}. It is deduced from viewing it as the 
(shifted) mapping cone of the inclusion of Lie algebra sheaves
\[T_{X/Y}\to T_X\]
where $T_{X/Y}$ denotes the torsion-free sheaf of vector fields on $X$ tangent to $Y$. This structure
induces a graded Lie atom structure on $N^\bullet$, deduced from the mapping cone of
\[T_{X/Y}T^\bullet_X\to T^\bullet_X\]
where $T^\bullet_X$ is the Schouten graded Lie algebra and $T_{X/Y}T^\bullet_X$ the exterior ideal
generated by $T_{X/Y}$, which is easily seen to be a graded Lie subalgebra, though
not a Lie ideal.\par
The Poisson structure $\Pi$ and log-Lagrangian condition on $Y$
enter into the differential (on $T^\bullet_X$, hence on $N^\bullet$).
To see that the differential of $T^\bullet_X$, i.e. $[.,\Pi]$, descends to $N^\bullet$ is suffices to show that
the subalgebra 
$T_{X/Y}T^\bullet_X$ is closed under $[.,\Pi]$, and by elementary properties of the Schouten bracket
it suffices to prove closedness of $T_{X/Y}$, i.e. to show that
\[[T_{X/Y}, \Pi]\subset T_{X/Y}T_X.\] 
To show this note that the latter subsheaf of $\bigwedge\limits^2 T_X$
consists precisely of the local bivectors that pair to zero with $df_1\wedge df_2$ for
all $f_1, f_2\in \I_Y$.
Then, let $v$ be a local vector field 
on $X$ tangent to $Y$ (i.e. preserving the ideal sheaf $\I_Y$),
and let $f_1, f_2$ be local functions in $\I_Y$. Then by a standard formula of Lichnerowicz, we have
\[\langle df_1\wedge df_2, [v,\Pi]\rangle=\pm v(\{f_1, f_2\})\pm 
\langle (dv(f_1)\wedge df_2-dv(f_2)\wedge df_1),\Pi\rangle.
\]
This vanishes on $Y$ by the Lagrangian condition, which shows that 
$[v, \Pi]\in T_{X/Y}T_X\subset \bigwedge^2T_X$.\par
Assertion (ii) follows from the stronger result, Theorem \ref{triple-thm} below.

\end{proof}
\subsection{Deformations of PL triples}
We will denote the \dgla  $T_{X/Y}T^\bullet_X $ seen above by $T^\bullet_X\{Y\}$.
Thus in degree $i$,  $T^i_X\{Y\}$ is the subsheaf of $\bigwedge\limits^i T_X$ 
locally generated by sections of the form
\[u\wedge v_1\wedge...\wedge v_{i-1}\]
with $u$ a section of $T_{X/Y}$ and the $v_j$ sections of $T_X$.
By a \emph{Poisson-Lagrange} deformation of a PL  triple $(X, \Pi, Y)$ as above
we mean a triple $(\tilde X, \tilde \Pi, \tilde Y)$ so that $(\tilde X, \tilde \Pi)$
is a Poisson deformation of $(X, \Pi)$, $(\tilde X, \tilde Y)$ is a deformation
of $(X, Y)$, and $\tilde Y$ is log-Lagrangian (isotropic) with respect to $\tilde\Pi$. Dropping 
the last condition leads to (plain) \emph{Poisson} deformations of $(X, \Pi, Y)$.
\begin{thm}\label{triple-thm}
Assume $X$ is compact. Then the deformation theory of $T^\bullet_X\{Y\}$ coincides with the 
Poisson-Lagrange deformation theory of
the triple $(X,\Pi,Y)$ .
\end{thm}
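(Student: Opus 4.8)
The plan is to compute the deformation theory of the sheaf \dgla $T^\bullet_X\{Y\}$, equipped with differential $[\cdot,\Pi]$ and the Schouten bracket, by means of its Jacobi--Bernoulli complex and the sela machinery of \cite{atom} and \cite{sela}, and to match the resulting deformation functor with the geometric functor of Poisson-Lagrange deformations of the triple. Since the case $Y=\emptyset$ (where $T_{X/Y}=T_X$ and $T^\bullet_X\{Y\}=T^\bullet_X$) is exactly the ambient statement of \cite{qsymplectic}, I would follow the architecture of that proof, the only new ingredient being the submanifold $Y$ and the bookkeeping imposed by the Lagrangian condition. Throughout, $T^\bullet_X\{Y\}$ is the complex of sheaves $T_{X/Y}\to T^2_X\{Y\}\to\cdots$ with $d=[\cdot,\Pi]$, a genuine sub-\dgla of the Poisson Schouten algebra by the preceding Theorem.

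First I would match tangent spaces. The first-order deformations of the \dgla are $\HH^1(T^\bullet_X\{Y\})$, and the hypercohomology spectral sequence presents this group as an extension of $H^1(T_{X/Y})$ by a subquotient of $H^0(T^2_X\{Y\})$, the two being glued by $[\cdot,\Pi]$. Geometrically, $H^1(T_{X/Y})$ is precisely the space of first-order deformations of the pair $(X,Y)$ (the complex structure moves while $Y$ is carried along holomorphically), while $H^0(T^2_X\{Y\})$ records the deformations of the bivector $\Pi$ that keep $Y$ isotropic: indeed, by the characterization used in the preceding Theorem, $T^2_X\{Y\}=T_{X/Y}T_X$ is exactly the sheaf of bivectors $w$ with $\langle df_1\wedge df_2,w\rangle\in\I_Y$ for all $f_1,f_2\in\I_Y$, which is the condition that the conormal spaces of $Y$ be isotropic for $w$. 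Thus $\HH^1(T^\bullet_X\{Y\})$ is canonically the space of first-order Poisson-Lagrange deformations of $(X,\Pi,Y)$, and the morphism to $\HH^1(T^\bullet_X)$ induced by inclusion is the forgetful map to Poisson deformations of $(X,\Pi)$.

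The main obstacle, and the heart of the argument, is the to-all-orders compatibility, which I would phrase as the statement that the Maurer--Cartan recursion (equivalently, the successive Jacobi--Bernoulli obstruction maps) never leaves the subalgebra. The forward direction rests on $T^\bullet_X\{Y\}$ being closed under both the Schouten bracket and the differential $[\cdot,\Pi]$; the closure under $[\cdot,\Pi]$ is exactly the inclusion $[T_{X/Y},\Pi]\subset T_{X/Y}T_X$ established above via the Lichnerowicz formula and the log-Lagrangian hypothesis, and it guarantees that each successive lift computed by the recursion again lies in the ideal, so that the isotropy of the deforming $Y$ propagates from first order to every order. The converse direction --- that every formal Poisson-Lagrange deformation is represented by cochains already lying in $T^\bullet_X\{Y\}$ --- is the delicate point: given a geometric family $(\tilde X,\tilde\Pi,\tilde Y)$ over an Artinian base with $\tilde Y$ isotropic, I would transport $\tilde\Pi$ back to the fixed underlying $(X,Y)$ using the trivialization determined by the pair-deformation class (legitimate because that class is tangent to $Y$), and then apply the same iff-characterization of isotropy fibrewise over the base to conclude that the resulting bivector cochain lands in $T^2_X\{Y\}$ at each order.

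Finally I would match equivalences: the degree-$0$ part $H^0(T_{X/Y})$ consists of the infinitesimal automorphisms of $X$ preserving $Y$, which are exactly the automorphisms acting on Poisson-Lagrange deformations of the triple, so the gauge actions correspond. Since $X$ is compact the finiteness and $\HH^0$-hypotheses of \cite{atom} and \cite{sela} hold, so combining the bijection on Maurer--Cartan elements with the matching of gauge yields an isomorphism of deformation functors over every Artinian base; \cite{atom}, Thm.~3.3 and \cite{sela}, Thms.~1.3 and 6.1 then identify the pro-representing rings $R_n$, which is the asserted coincidence. Assertion (ii) of the preceding Theorem follows at once by passing to the mapping cone $N^\bullet=\cone(T^\bullet_X\{Y\}\to T^\bullet_X)$, which isolates the relative part of the triple deformation, namely the Hilbert--Lagrange deformations of $Y$ in the fixed $(X,\Pi)$.
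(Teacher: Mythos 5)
Your proposal is correct and follows essentially the same route as the paper: an induction over Artinian rings via the Jacobi--Bernoulli/sela machinery, in which the log-Lagrangian condition on the deformed $Y$ is identified, order by order, with the extra cocycle condition cutting out the sub-dgla $T^\bullet_X\{Y\}$ inside the data of a Poisson deformation of $(X,\Pi)$ together with a flat deformation of $(X,Y)$. The one step you leave implicit --- that after separately trivializing $\tilde X$ and $\tilde Y$ locally, the obstruction to isotropy at the inductive step $R_1=R/(\eta)$ is literally $(f_1,f_2)\mapsto\{v(f_1),f_2\}-\{v(f_2),f_1\}-v(\{f_1,f_2\})=\langle[v,\Pi],df_1\wedge df_2\rangle$ for the normal field $v\in H^0(N)\otimes\m_R$ recording the embedding --- is exactly the local computation the paper writes out, and it goes through as you indicate.
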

\begin{proof}
Given the theory of \S 1, what's 
being asserted is that given a local Artinian algebra $R$, Poisson-Lagrange
 deformations
of $(X,\Pi, Y)$ are in bijective correspondence
with comultiplicative elements  of the Jacobi- Bernoulli cohomology group
$\HH^0(J(T^\bullet_X\{Y\}, R)$. In proving this assertion, we may assume
 the corresponding assertions for the differential graded Lie algebras
$T^\bullet_X$ and $T_{X/Y}$ with $R$ coefficients, as well as 
for $T^\bullet_X\{Y\}$ with coefficients in $R_1, \dim_\C(R_1)<\dim_\C(R)$,
to be true. The compactness assumption on $Y$ ensures that the groups
in question are all finite-dimensional.\par
Thus let $R_1=R/(\eta)$ where $\eta$ is in the socle $\mathrm{Ann}_R(\m_R)$,
and  suppose given a deformation diagram
\eqspl{}{
\begin{matrix}
\tilde Y&&\subset&&(\tilde X, \tilde \Pi)\\
&\searrow&&\swarrow&\\
&&\Spec(R)&&
\end{matrix}
} so that $(\tilde X, \tilde \Pi)$ is a Poisson deformation, $(\tilde Y\subset \tilde X)$ is a flat 
deformation, and so that the pullback over $R_1$ is a Poisson-Lagrange
deformation. The obstruction to $\tilde Y$ being log-Lagrangian over $R$
is the Poisson bracket
\[\{.,.\}:\I_{\tilde Y}\times \I_{\tilde Y}\to\O_{\tilde Y}\]
and by our assumption that the assertion above holds for $R_1$ in place of $R$, this map factors through
a pairing
\eqspl{obstruction}{\I_Y\times \I_Y\to \O_Y.}
Note that the obstruction to $\tilde Y$ being log-Lagrangian 
is of a local nature, so in analyzing it we may
choose compatible local coordinates on $X$ and $Y$ and assume 
that the deformations $\tilde X$ and $\tilde Y$
are, separately and not necessarily compatibly, trivial: i.e.
\[\tilde X\simeq X\times \Spec(R),\ \tilde Y\simeq Y\times\Spec(R).\]
Then the pairwise
deformation $\tilde Y\to \tilde X$ corresponds to a map
\eqsp{v:\I_Y\to \m_R\otimes\O_Y, v\in H^0(N)\otimes \m_R\\
\I_{\tilde Y}=\{f+v(f): f\in I_Y.\}
} Then in these terms the obstruction \eqref{obstruction} is given by
\[(f_1, f_2)\mapsto \{v(f_1),f_2\}-\{v(f_2),f_1\}-v(\{f_1, f_2\}).\]
(by our assumptions this is in $\eta\O_Y\subset \m_R\O_Y)$.
On the other hand, in terms of the Poisson differential $[.,\Pi]$,
this is exactly $\langle [v, \Pi], df_1\wedge df_2\rangle$, QED.
\end{proof}
\vskip 1in
\section{Unobstructed deformations}
\subsection{Set-up and statement}
We will keep the notations of the previous section. 
Thus, $(X,\Pi)$ is a
holomorphic Poisson manifold, not necessarily compact,
and $Y$ is a compact log-Lagrangian submanifold.
We now add the further hypotheses:\par
(i) $X$ is K\"ahlerian and P-normal along $Y$, with Paffian divisor $D$;\par 
(ii) $Y$ is {transverse}.\par
The transversality assumption means that $\bar D=D\cap Y$
has normal crossings. This is equivalent to the following condition:
let $D\sbr i.$ denote the $i$-fold locus of $D$, which
is locally a union of smooth branches of codimension $i$ in $X$. 
Then every branch of $D\sbr i.$
is transverse to $Y$, for all $i>0$. Note that this condition is strictly stronger
than the condition that every local branch of $D$ itself is transverse to $Y$.\par
We recall that Poisson deformations (resp. Poisson-Lagrange deformations)
of $(X, \Pi, Y)$ are deformations where $X$ deforms holomorphically,
$\Pi$ deforms as Poisson structure, and $Y$ deforms as arbitrary
(resp. log-Lagrangian) submanifold. The Poisson deformation space of
$(X, \Pi, Y)$ coincides with the fibre product 
\eqsp{
\begin{matrix}
\Def(X, \Pi)&&&&\Def(X, Y)\\
&\searrow&&\swarrow&\\
&&\Def(X).&&
\end{matrix}
}
Here it is convenient to interpret these and similar deformation spaces as
formal completions of suitable Kuranishi spaces.
We denote
by  $\mathrm{Def_{loc.\ trival}}(X, D, Y)$ the space of deformations of the
triple $(X, D, Y)$ where $D$ deforms locally trivially. This space
corresponds to the dgla $T_X\{Y\}\llog{D}$.
\begin{thm}\label{XYD-thm}Notations as above,  $Y$ has unobstructed Hilbert and Hilbert-Lagrange deformations
in $X$, and these coincide;
the space of first-order deformations of $Y$ in $X$ is canonically isomorphic to $H^0(Y, \Omega^1_Y\llog{\bar D})$.
\par
Furthermore, if $X$ is compact and P-normal, 
then the following assertions hold.\par 
(i) The triple $(X,\Pi, Y)$ has unobstructed Poison-Lagrange and Poisson deformations
and these deformations
coincide and induce locally trivial deformations on $D$.\par
(ii) The deformation space $\mathrm{Def_{loc.\ trival}}(X, D, Y)$ is  unobstructed.
\par
(iii) There is a deformation space of quadruples $(X, \Pi, D, Y)$ that maps
smoothly to $\Def(X, \Pi, Y)$ and to $\mathrm{Def_{loc.\ trival}}(X, D, Y)$
\end{thm}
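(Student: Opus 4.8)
The plan is to feed everything into the $T^1$-lifting criterion of \S1.3; the substantive work is to manufacture the Hodge-theoretic isomorphisms $\phi_i$ that the criterion requires, after which unobstructedness is formal. I would treat the local statement first, where, by the normal dg atom of \S2, Hilbert-Lagrange deformations of $Y$ are governed by $N^\bullet$ with its Poisson differential $[\,\cdot\,,\Pi]$. The first step is to promote the log-symplectic isomorphism $T_X\llog{D}\simeq\Omega^1_X\llog{D}$ furnished by $\Pi$ (see \cite{qsymplectic}) to an isomorphism $N\simeq\Omega^1_Y\llog{\bar D}$ of sheaves on $Y$. Here the transversality hypothesis is exactly what guarantees that $\bar D$ is normal-crossing on $Y$ and that the ambient logarithmic poles along $D$ restrict to logarithmic poles along $\bar D$, rather than degenerating further along the deeper strata $D\sbr i.$; the log-Lagrangian condition is what makes the conormal directions line up. Passing to exterior powers and matching $[\,\cdot\,,\Pi]$ with the de Rham differential, I would obtain a quasi-isomorphism $N^\bullet\qis\Omega^\bullet_Y\llog{\bar D}$ (up to the shift inherent in the mapping-cone description of the atom).

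With this identification the two inputs of the criterion become available. First, Deligne's mixed Hodge theory on the compact K\"ahler $Y$ with normal-crossing $\bar D$ gives $E_1$-degeneration of the logarithmic Fr\"olicher spectral sequence computing the cohomology of $Y\setminus\bar D$; in particular $d\colon H^0(\Omega^1_Y\llog{\bar D})\to H^0(\Omega^2_Y\llog{\bar D})$ vanishes, so every holomorphic logarithmic $1$-form is closed. Since first-order Hilbert deformations are $H^0(N)=H^0(\Omega^1_Y\llog{\bar D})$ and first-order Hilbert-Lagrange deformations are the log-closed forms, this forces the two tangent spaces to coincide and yields the stated canonical isomorphism. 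Second, the obstruction $o(v)$ is identified via $\phi$ with the bracket $\langle v,\phi_1(v)\rangle$, which computes the variation of a topological class in the cohomology of $Y\setminus\bar D$ along the (locally trivial) deformation $v$; this variation vanishes, so $o(v)=0$ and $N^\bullet$ is unobstructed. As the forgetful map from Hilbert-Lagrange to Hilbert deformations is a closed immersion of functors inducing an isomorphism on tangent spaces, and its source is now smooth, the two spaces coincide and are both unobstructed.

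For the compact P-normal case I would run the identical argument one level up, replacing $N^\bullet$ by the \dgla $T^\bullet_X\{Y\}\llog{D}$ controlling $\mathrm{Def_{loc.\ trival}}(X,D,Y)$ (Theorem \ref{triple-thm}). Using $\Pi$ as in \cite{qsymplectic} to convert the logarithmic tangent data into logarithmic cotangent data, and invoking $E_1$-degeneration on the compact K\"ahler $X$, the $T^1$-lifting criterion again applies and gives assertion (ii): $\mathrm{Def_{loc.\ trival}}(X,D,Y)$ is unobstructed, the local triviality of the $D$-deformation being built into the complex and ensuring that the relevant classes are topological. For (iii) I would introduce the space of quadruples $(X,\Pi,D,Y)$ with $D=[\Pi^n]$ the Pfaffian, together with its two forgetful maps: to $\Def(X,\Pi,Y)$ (forgetting $D$, which is anyway determined by $\Pi$) and to $\mathrm{Def_{loc.\ trival}}(X,D,Y)$ (forgetting $\Pi$). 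The real content is the smoothness of both maps; the second reduces, via \cite{qsymplectic}, to the fact that Poisson deformations of a P-normal pseudo-symplectic $X$ move the Pfaffian only locally trivially.

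Assertion (i) then follows formally: the quadruple space is smooth because it is smooth over the smooth space of (ii), and it maps smoothly onto $\Def(X,\Pi,Y)$, forcing the latter to be unobstructed; the coincidence of Poisson and Poisson-Lagrange deformations, and the local triviality of the induced deformations of $D$, are read off as before from the vanishing of $d$ on holomorphic log forms and from the factorization through $\mathrm{Def_{loc.\ trival}}(X,D,Y)$. I expect the genuine difficulty to be concentrated entirely in the first step of each half, namely the sheaf-level identification of the Poisson deformation complexes $N^\bullet$ and $T^\bullet_X\{Y\}\llog{D}$ with the logarithmic de Rham and cotangent complexes, with differentials matched. This is where transversality is indispensable and where the local model at the corank-$2$ locus and along the higher strata $D\sbr i.$ must be analyzed carefully; once the quasi-isomorphism is in place, the mixed Hodge input and the $T^1$-lifting are, as in \cite{voisin-lagrangian} and \cite{qsymplectic}, essentially formal.
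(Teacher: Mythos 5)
Your proposal follows essentially the same route as the paper: identify the normal atom $N^\bullet$ with $\Omega^\bullet_Y\llog{\bar D}$ and the dgla $T^\bullet_X\{Y\}\llog{D}$ with $\Omega^\bullet_X\{Y\}\llog{D}$ via $\bigwedge\Pi^\#$ (the paper does this through the map of short exact sequences \eqref{duality}, deducing bijectivity of the right-hand arrow from the local computation of \cite{qsymplectic} plus transversality), then feed Deligne's $E_1$-degeneration into the $T^1$-lifting/null-homotopy mechanism. Two points deserve flagging. First, a mislabeling: in the paper's conventions the full complex $T^\bullet_X\{Y\}\llog{D}$ controls the \emph{quadruple} deformations of $(X,\Pi,D,Y)$, while $\mathrm{Def_{loc.\ trival}}(X,D,Y)$ is governed by the ordinary dgla $T_X\{Y\}\llog{D}$; its smoothness is then extracted from surjectivity of the edge map $\Omega^\bullet_X\{Y\}\llog{D}\to\Omega^1_X\{Y\}\llog{D}$, not by running the lifting criterion on it directly. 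Second, and more substantively, the smoothness of the map from the quadruple space to $\Def(X,\Pi,Y)$ --- which is what ultimately delivers assertion (i) --- rests in the paper on an explicit quasi-splitting of the inclusion $T^\bullet_X\{Y\}\llog{D}\to T^\bullet_X\{Y\}$, built from the auxiliary complex $K^\bullet\{Y\}$ with components $T^{i+1}_X\{Y\}\oplus T^{i-1}_X\otimes N^0_D$ and the map $u\mapsto nu\wedge\Pi^{n-1}$; the adaptation to the $\{Y\}$-decorated setting uses transversality to see that the image of $T_X\{Y\}\to N_D$ is all of $N^0_D$. Your proposal invokes the heuristic (P-normal Poisson deformations move the Pfaffian locally trivially) but does not supply this construction, which is the one genuinely non-formal step beyond the Hodge-theoretic input; everything else in your outline matches the paper's argument.
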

\begin{rem}
A map $D_1\to D_2$ of deformation spaces is smooth iff any infinitesimal deformation
associated to $D_2$, parametrized by an Artinian (finite-dimensional) $\C$-algebra $R$,
lifts to a deformation associated to $D_1$ and parametrized by $R$.
\end{rem}
As in \cite{qsymplectic}, we deduce directly from the Theorem:
\begin{cor}
Assumptions as above with $X$ compact. Given a deformation $(\tilde X, \tilde Y)$ of $(X, Y)$, the Poisson structure $\Pi$ extends
to $(\tilde X, \tilde Y)$ iff $D$ extends locally trivially to $(\tilde X, \tilde Y)$.
\end{cor}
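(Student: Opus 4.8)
The plan is to deduce the corollary formally from Theorem \ref{XYD-thm}, whose parts (i) and (iii) already contain all the geometric content; what is left is a chase in the category of (formal Kuranishi) deformation spaces over the base $\Def(X, Y)$. Fix an Artinian local $\C$-algebra $R$ and view the given $(\tilde X, \tilde Y)$ as a point of $\Def(X, Y)(R)$. Unwinding the definitions, ``$\Pi$ extends to $(\tilde X, \tilde Y)$'' says precisely that $(\tilde X, \tilde Y)$ lies in the image of the forgetful map $\Def(X, \Pi, Y)(R)\to\Def(X, Y)(R)$; here Theorem \ref{XYD-thm}(i) lets me drop the adjective log-Lagrangian, since Poisson and Poisson-Lagrange deformations coincide, so any extending $\tilde\Pi$ automatically keeps $\tilde Y$ isotropic. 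Dually, ``$D$ extends locally trivially'' says that $(\tilde X, \tilde Y)$ lies in the image of $\mathrm{Def_{loc.\ trival}}(X, D, Y)(R)\to\Def(X, Y)(R)$.

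The forward implication is then immediate from the last clause of Theorem \ref{XYD-thm}(i): an extension of $\Pi$ gives a Poisson(-Lagrange) deformation, which by (i) induces a locally trivial deformation of the Pfaffian $D$. For the converse, and indeed for a uniform treatment of both directions, I would use the quadruple space of Theorem \ref{XYD-thm}(iii). Its two projections
\[\Def(X, \Pi, D, Y)\to\Def(X, \Pi, Y),\qquad \Def(X, \Pi, D, Y)\to\mathrm{Def_{loc.\ trival}}(X, D, Y)\]
are smooth, hence by the criterion of the Remark surjective on $R$-points. Since the quadruple space remembers the underlying pair $(X, Y)$, both projections followed by the relevant forgetful map yield the same map to $\Def(X, Y)$. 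Consequently the image of $\Def(X, \Pi, Y)(R)$ in $\Def(X, Y)(R)$ and the image of $\mathrm{Def_{loc.\ trival}}(X, D, Y)(R)$ there both coincide with the image of $\Def(X, \Pi, D, Y)(R)$ under the common composite, and therefore coincide with each other. This is exactly the claimed equivalence.

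The one place to be careful --- and the only conceivable obstacle --- is that these must be lifts over the \emph{fixed} pair $(\tilde X, \tilde Y)$, not merely over some deformation of $(X, Y)$: a bare surjectivity of abstract deformation functors would not do. This is why it matters that every space in the diagram lives over $\Def(X, Y)$ and every map is a map over $\Def(X, Y)$; the smoothness in Theorem \ref{XYD-thm}(iii) is then smoothness relative to $\Def(X, Y)$, so the lifts it produces leave $(\tilde X, \tilde Y)$ untouched. Granting this relative formulation, the corollary is essentially a one-line consequence of (iii), with (i) doing the bookkeeping that allows ``log-Lagrangian'' to be suppressed throughout.
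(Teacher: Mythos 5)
Your proposal is correct and is essentially the deduction the paper intends: the text offers no argument beyond ``we deduce directly from the Theorem,'' and your chase --- part (i) for the forward direction and for suppressing the Lagrangian condition, plus the smooth (hence surjective on Artinian points, by the Remark) projections from the quadruple space of part (iii) over the common base $\Def(X,Y)$ for the converse --- is exactly that deduction made explicit. Your closing caveat about working over the fixed $(\tilde X,\tilde Y)$ is already handled by your image-comparison argument, since the two composites $\Def(X,\Pi,D,Y)\to\Def(X,Y)$ coincide.
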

\subsection{Proof}
\begin{proof}[Proof of Theorem]
Let $\bar D$ be the restriction of the Pfaffian divisor $D$ on $Y$. By
our hypotheses, both $D$ and $\bar D$ have normal crossings. 
Henceforth, we will denote by $\Omega^\bullet$ various de Rham complexes truncated to
\emph{strictly positive} degrees (i.e. omitting the zeroth term  $\Omega^0=\O$).
Denote by
$\Omega^\bullet_X\{Y\}$ the kernel of the pullback map $\Omega^\bullet_X\to\Omega^\bullet_Y$.
Thus, $\Omega^1_X\{Y\}$ is locally generated by $\I_Y\Omega^1_X$ 
together with elements of the form $df, f\in \I_Y$;
and $\Omega^\bullet _X\{Y\}$ is generated by $\Omega^1_X\{Y\}$ as \emph{exterior ideal}, i.e.
\[\Omega^i_X\{Y\}=\Omega^1_X\{Y\}\wedge \Omega^{i-1}_X, i>1.\]
The basic formula
\[[df, dg]=d\{f,g\}\]
shows that the log-Lagrangian hypothesis implies
 $\Omega^1_X\{Y\}$ is a Lie subalgebra of $\Omega^1_X$ under Poisson-Lie bracket.
Then, using the usual derivation properties of brackets,
 it is not hard to check that $\Omega^\bullet_X\{Y\}$ has the structure of \dgla
so that the inclusion into the Lie-Poisson algebra $\Omega^\bullet_X$ is  a Lie subalgebra.
This turns the cokernel $\Omega^\bullet_Y$ into a differntial graded Lie atom.
Likewise, for the log differentials $\Omega^\bullet_X\{Y\}\llog{D}$,
a subalgebra of $\Omega^\bullet_X\llog{D}$ with
cokernel atom $\Omega^\bullet_Y\llog{\bar D}$. Now recall
the homomorphism $\bigwedge^\bullet\Pi^\#$ already used in \cite{qsymplectic}. It
yields a map of short exact sequences
\eqspl{duality}{\begin{matrix}
0\to&\Omega^\bullet_X\{Y\}\llog{D}\to &\Omega^\bullet_X\llog{D}\to&\Omega^\bullet_Y\llog{\bar D}\to & 0\\
&\downarrow&\downarrow&\downarrow&\\
0\to&T^\bullet_X\{Y\}\llog{D}\to & T^\bullet_X\llog{D}\to  &\ \ \ N^\bullet\to & 0.
\end{matrix}
}
[Regarding the (possibly surprising)
cokernel of the rightmost vertical map, note that the transversality of $Y$ and $D$ implies that the 
equations of the branches of $D$
at each point of $Y\cap D$ may be assumed to be part of a local coordinate system on $Y$,
while the passage from $T^\bullet_X$ to $T^\bullet_X\{Y\}$ affects only coordinates normal to $Y$;
hence the cokenel of $T^\bullet_X\{Y\}\llog{D}\to  T^\bullet_X\llog{D}$ is the same as
that of $T^\bullet_X\{Y\}\to  T^\bullet_X$, i.e. $N^\bullet$. Heuristically, it is clear by transversality that
motions of $Y$ in $X$ extend to (locally trivial) motions of $(Y, \bar D)=(Y, Y\cap D)$ so the appropriate
notion of 'log-normal complex' or "$N^\bullet\llog{\bar D}$" is just $N^\bullet$.]\par
The first two vertical maps are dgla homomorphisms, hence the right vertical arrow is a
Lie atom homomorphism. In any event, a local computation  in \cite{qsymplectic} shows
that the middle vertical arrow is bijective, and the same computation
also shows that the left vertical arrow is bijective.\par
We will now prove (i). The argument is the same as in the proof of the main theorem in \cite{qsymplectic}: Delgne's $E_1$ degeneration
theorem implies $E_1$-degeneration for $\Omega^\bullet_X\llog{D}$ and $\Omega^\bullet_Y\llog{\bar D} $,
hence for $\Omega^\bullet_X\{Y\}\llog{D}$.
Consequently,  by a variant of the T1-lifting criterion reviewed in \S 1, the bracket pairing
induces the trivial pairing on cohomology for the algebra $T^\bullet_X\{Y\}\llog{D}$,
hence this algebra has
unobstructed deformations.
Indeed in this case the vanishing
of obstructions is almost immediate from the fact that the exterior derivative operator $d$ induces
the zero map on $\HH^\bullet(\Omega^\bullet_X\{Y\}\llog{D}$, plus the standard formula for
Poisson-Lie bracket
\[[\omega_1, \omega_2]=\langle d\omega_1, \Pi^\sharp(\omega_2)\rangle-\langle d\omega_2,
\Pi^\sharp(\omega_1)-d(\Pi^\sharp(\omega_1\wedge \omega_2))\]
where $\Pi^\sharp$ denotes the duality operator (essentially interior multiplication by $\Pi$)
which yields a null homotopy for the bracket-induced map
\[\sym^2(\Omega^\bullet_X\{Y\}\llog{D})\to\Omega^\bullet_X\{Y\}\llog{D}.\]\par 
 Then we see as in \cite{qsymplectic},  \S 3.2,   that the inclusion
\[T^\bullet_X\{Y\}\llog{D}\to T^\bullet_X\{Y\}\] is a direct summand projection, so that
$(X, \Pi, Y)$ has unobstructed Poisson-Lagrange deformations.
Specifically, we define a complex $K^\bullet\{Y\}$ by
\eqspl{}{
K^0\{Y\}=T_X\{Y\};\\
K^1\{Y\}=T^2_X\{Y\}\oplus N^0_D;\\
K^i\{Y\}=T^{i+1}_X\{Y\}\oplus T^{i-1}_X\otimes N^0_D, \ i\geq 2
} where $N^0_D\subset N_D=\O_D(D)$ is the image of $T_X\to N_D$, which also coincides
with the image of $T_X\{Y\}\to N_D$ by transversality (NB: 
this explains why we don't need to define something
like $N^0_D\{Y\}$- it would be the same as $N^0_D$). 
The maps are as in \cite{qsymplectic}, \S 3.2,
as is the proof that the map $T^\bullet\{Y\}\llog{D}\to K^\bullet\{Y\}$ 
is a quasi-isomorphism. As there, we have a map
$\phi:T^\bullet\{Y\}\to K^\bullet\{Y\}$ which yields a quasi-splitting 
to the inclusion $T^\bullet\{Y\}\llog{D}\to T^\bullet\{Y\}$.
The components of $\phi$ going from $T^i_X\{Y\}$ to $T^i_X\{Y\}$
are the identity.
 The component of $\phi$ going from
$T^2_X\{Y\}$  to $N^0_D$ is given by 
\[u\mapsto nu\wedge\Pi^{n-1};\]
then as usual  this is extended to $T^{i+1}_X\{Y\}\to T^{i-1}_X\otimes N^0_D, i>1$ as an exterior
derivation, i.e. by
\[v_1\wedge...v_{i+1}\mapsto \sum\limits_{a<b} 
(-1)^{a+b}v_1\wedge...\wedge \hat{v_a}\wedge...\wedge\hat{v_b}\wedge...
\wedge v_{i+1}\otimes\phi(v_a\wedge v_b).\]
 \par

The fact that Poisson-Lagrange and Poisson deformations coincide
is a consequence of surjectivity of the edge map
\[\HH^\bullet(T^\bullet_X\{Y\}\llog{D}\to H^\bullet (T_{X/Y}\llog{D})\]
which in turn is a special case of the $E_1$ degeneration. Indeed a Poisson
deformation of $(X, \Pi, Y)$ induces a locally trivial deformation of $(X, D, Y)$,
and the latter deformations are controlled exacly by the dgla $T_{X/Y}\llog{D}$.
This proves assertion (i) in the Theorem.
\par
As for assertions (ii) and (iii), consider the deformation space 
of quadruples $(X, \Pi, D, Y)$ that is associated to the dgla $T^\bullet_X\{Y\}\llog{D}$.
As we have seen, this is smooth as follows from $E_1$ degeneration for the
latter complex. 
The fact that the inclusion
$T^\bullet_X\{Y\}\llog{D}\to T^\bullet_X\{Y\}$ admits a left quasi-inverse, i.e. a left inverse
in the derived category, implies its surjectivity in cohomology, whence smoothness of
the induced map on deformation spaces. The smoothness of $\mathrm{Def_{loc. trivial}}(X, D, Y)$,
and of the map to it from the deformation space of $T^\bullet_X\{Y\}\llog{D}$, 
follows from surjectivity
on cohomology of the edge map  
\[\Omega^\bullet_X\{Y\}\llog{D}\to  \Omega^1_X\{Y\}\llog{D}\]
which is a consequence of
Deligne's result on
$E_1$ degeneration for  $\Omega^\bullet_X\{Y\}\llog{D}$ \cite{deligne-hodge-iii}.
This establishes assertion (ii) and (iii).
\par
Finally, the smoothness and equality of Hilbert and Hilbert-Lagrange deformations of $Y$ assuming only
compacness of $Y$ is a consequence of the diagram \eqref{duality} (bijectivity of
the right vetical arrow, which uses only P-normality of $X$ along $Y$ and 
transversality), plus Delgne's $E_1$ degeneration for
$\Omega^\bullet_Y\llog{\bar D}$, which implies the vanishing of obstructions for the dg Lie atoms
$N^\bullet\simeq \Omega^\bullet_Y\llog{\bar D}$ and $N\simeq \Omega^1_Y\llog{\bar D}$ 
and surjectivity of the edge map \[\HH^0(N^\bullet)\to 
H^0(N).\]

\end{proof}
\subsection{An example}
\begin{example}\label{hilbert-example}
Let $S$ be a smooth compact surface and $C\subset S$ a smooth anticanonical divisor.
Then $C$ corresponds to a Poisson structure $\Pi$ on S. As shown in \cite{qsymplectic}, $\Pi$
induces a Poisson structure $\Pi\sbr r.$ on the degree-$r$ Hilbert scheme $S\sbr r.$.
This structure is not
P-normal. Now let $r=2$. Then $\Pi\sbr 2.$ still is not P-normal: its Paffian divisor is the locus of schemes
having nonempty intersection with $C$, and has Whitney umbrella-
type singularities; but $\Pi\sbr 2.$ lifts to a P-normal  Poisson structure $\Pi_2$ on the blowup $f:X_2\to S\sbr 2.$ 
of $S\sbr 2.$ in $C\spr 2.$,
the locus of schemes contained in $C$. We saw in \cite{qsymplectic}
that the deformation space of $(S\sbr 2.,\Pi\sbr 2.)$
is isomorphic to that of $(X_2, \Pi_2)$, hence is unobstructed.
Briefly, the argument that the
respective deformation spaces of 
$(S\sbr 2.,\Pi\sbr 2.)$ and $(X_2, \Pi_2)$ are isomorphic runs as follows.
A deformation of $(S\sbr 2., \Pi\sbr 2.)$ induces a locally trivial deformation of $\pf(\Pi\sbr 2.)$,
 hence a deformation of $C\spr 2.$ as the singular locus of $\pf(\Pi\sbr 2.)$, hence also a deformation 
 of $(X, \Pi_2)$. Conversely, it is well known
 (e.g. \cite{horikawa} ) that a deformation of a blowup of a manifold along a smooth
 submanifold induces a 
  a deformation of the blowdown morphism. In particular, a deformation of $X_2$
induces a deformation of $S\sbr 2.$, therefore deformations of $(X, \Pi^2)$ induce deformations of 
 $(S\sbr 2., \Pi\sbr 2.)$.
 \par
Now let $B\subset S$ be a smooth curve transverse
to $C$. Then $B\spr 2.\subset S$ is transverse to $C\spr 2.$, 
and
$B^2=f\inv(B\spr 2.)\simeq B\spr 2.$ is transverse to $D=\pf(\Pi^2)=f\inv(\pf(\Pi\sbr 2.))$ and is
 log-Largrangian for $\Pi^2$. As noted above, the respective deformation spaces of 
 $(S\sbr 2., \Pi\sbr 2.)$ and $(X, \Pi^2)$ are naturally isomorphic, hence so are those of
 $(S \sbr 2., \Pi\sbr 2., B\spr 2.)$ and $(X_2, \Pi^2, B^2)$. 
 \par

 Therefore, $(S, \sbr 2., \Pi\sbr 2., B\spr 2.)$
 has unobstructed (Poisson, or equivalently Poisson-Lagrange) deformations.\par
For $r>2$, $\Pi\sbr r.$ is still P-normal along $B\spr r.$, because locally a subscheme
 of $B$ can have at most a length-1 intersection with $C$; therefore 
 by the Theorem, $B\spr r.$ has unobstructed
(Hilbert or Langrange: they are the same) deformations in $S\sbr r.$.\par
 \emph{Conjecturally},  as noted in \cite{qsymplectic},  the rest of this example also extends to the
 case $r>2$, because it is conjectured that the following
 process yields a P-normal blowup of $(S\sbr r., \Pi\sbr r.)$: 
blowing up the $r$-fold locus of the Pfaffian of $\Pi\sbr r.$, then 
blowing up the proper transform of the $(r-1)$st fold locus, etc.
This conjecture is known to hold over the open set in $S\sbr r.$ consisting of curvilinear schemes 
(whose complement  has codimension $>2$). It appears to hold more generally over
the set of \emph{locally monomial} subschemes (defined locally by a monomial ideal); and this
would seem to imply the general case since every subscheme is a 
deformation of a locally monomial one. The details of the case $r>2$ are yet to be written down. 
\end{example}
\begin{rem}
Christian Lehn \cite{lehn-lagrangian} has generalized the Voisin theorem to normal-crossing 
Lagrangian \emph{subvarieties} $Y$. The analogous statement in the Poisson setting remains open 
(recall that in our treatment we are always assuming a Lagrangian $Y$- unlike the Pfaffian divisor $D$-
to be smooth).
\end{rem}
\vskip 2in
\vfill\eject
\bibliographystyle{amsplain}
\bibliography{../mybib}
\end{document}